\documentclass[12pt]{article}

\setlength{\textheight}{23cm}\setlength{\textwidth}{16cm}
\setlength{\topmargin}{-1cm} \setlength{\oddsidemargin}{-0.2cm}
\setlength{\evensidemargin}{0.0cm}

\usepackage{epsfig,amsmath,amsfonts,amssymb,latexsym,color,amsthm,hhline,multirow,subfigure,graphicx}

\newtheorem{theorem}{Theorem}[section]

\newtheorem{prop}{Proposition}[section]
\newtheorem{lemma}{Lemma}[section]

\newcommand{\E}{{\mathbb E}}

\newcommand {\PP}{{\mathbb P}}
\newcommand{\sss}{\scriptscriptstyle}
\newcommand{\Zd}{\mathbb{Z}^d}

\newcommand{\0}{{\bf 0}}
\newcommand{\1}{{\bf 1}}
\newcommand{\cA}{\mathcal{A}}
\newcommand{\cD}{\mathcal{D}}
\newcommand{\vep}{\varepsilon}
\newcommand{\n}{{\bf n}}

\begin{document}

\title{The initial set in the frog model is irrelevant}\parskip=5pt plus1pt minus1pt \parindent=0pt
\author{Maria Deijfen\thanks{Department of Mathematics, Stockholm University; {\tt mia@math.su.se}} \and Sebastian Rosengren \thanks{Department of Mathematics, Stockholm University; {\tt rosengren@math.su.se}}}
\date{December 2019}
\maketitle

\begin{abstract}
\noindent In this note, we consider the frog model on $\Zd$ and a two-type version of it with two types of particles. For the one-type model, we show that the asymptotic shape does not depend on the initially activated set and the configuration there. For the two-type model, we show that the possibility for the types to coexist in that both of them activate infinitely many particles does not depend on the choice of the initially activated sets and the configurations there.
\vspace{0.3cm}

\noindent \emph{Keywords:} Frog model, random walk, asymptotic shape, competing growth, coexistence.

\vspace{0.2cm}

\noindent AMS 2010 Subject Classification: 60K35.
\end{abstract}

\section{Introduction}\label{sec:intro}

The frog model on $\mathbb{Z}^d$ is a growth model driven by random walkers. Each site $x\in \mathbb{Z}^d$ is initially populated with an independent identically distributed (i.i.d.) number $\eta(x)$ of sleeping particles. At time 0 the particles at the origin are activated and start moving according to independent simple symmetric random walks in discrete time. When a site is hit by an active particle, any sleeping particles there are activated and start moving according to independent random walks. The model goes back to  \cite{telcs} and has been further studied e.g.\ in \cite{frogs_shape, frogs_shape_random, phase_transition}. The dynamics can also be based on lazy random walks, where a particle in a given time step independently performs a random walk jump with probability $p$ or stays put with probability $1-p$; see \cite{comp_frogs}.

Recently, a two-type version of the model was introduced in \cite{comp_frogs}, where particles of type $i$ move according to lazy random walks with jump probability $p_i$. All particles in the initial i.i.d.\ configuration are initially sleeping and neutral. At time 0 the particles at the origin are activated and assigned type 1, and the particles at a neighboring site are activated and assigned type 2. Activated particles then start moving and, when a type $i$ particle arrives a new site, any sleeping particles there are activated and assigned type $i$ ($i=1,2$). If two particles of opposite type arrive at a site in the same time step, an arbitrary tie-breaker rule is applied to determine the outcome.

A site is said to be \emph{discovered} when it has been visited by an active particle. One of the main results for the one-type model is a shape theorem for the set of discovered sites, stating that it grows linearly in time and converges to a deterministic shape when scaled by time. In this note, we first observe that the limiting shape does not depend on the choice of the initially active set and the configuration there, that is, the set of discovered sites when an arbitrary configuration of particles in a bounded set $A$ are activated at time 0 converges to the same asymptotic shape as when starting only from the origin (Proposition \ref{prop:shape}). Our main result then concerns the possibility of coexistence in the two-type model, which is said to occur when both types activate infinitely many particles. We show that, if the types can coexist when starting from two bounded disjoint sets $A$ and $B$ populated by arbitrary particle configurations then, for $p_1,p_2\in(0,1)$, they can do so starting from any other two sets $A'$ and $B'$ as well (Theorem \ref{th:coex}). We will not give a literature overview here, but refer to \cite{comp_frogs} for references on related competition models. We mention however that an analog of Theorem \ref{th:coex} for competing first passage percolation was proved in \cite{irrel} using different arguments.\medskip

\noindent \textbf{Definition of the model and notation}

Before stating our results, we give a formal construction of the model with general initial set(s). We give the construction of the two-type model, since the one-type model can be obtained as a special case of this. Let $\nu$ denote the product measure on $\Zd$ defined by the family $\{\eta(x)\}_{x\in \Zd }$ of i.i.d.\ random variables. Fix two bounded disjoint sets $A,B\subset \Zd$ and assign a finite number (random or deterministic) of sleeping particles to each site in $A\cup B$ in an arbitrary way, that is, not necessarily independently and not necessarily according to the same distribution at each site. Then assign sleeping particles to sites in $(A\cup B)^c$ in an i.i.d.\ fashion according to the restriction of $\nu$ to $(A\cup B)^c$. Order the particles at each site and let $(x,j)$ denote particle $j$ at $x\in\Zd$. To each particle $(x,j)$, assign independently a simple symmetric random walk $ \{S^{x,j}_n:\ n\in \mathbb{N} \}$ on $\mathbb{Z}^d$ --- controlling \textit{how} $(x,j)$ moves --- as well as an i.i.d.\ family of delay variables $\{L^{x,j}_{n,k}:\ n,k\in \mathbb{N} \}$ uniformly distributed on $[0,1]$ --- controlling \textit{when} $(x,j)$ moves; see below.

The process is initiated at time 0 in that particles in the sets $A$ and $B$ are activated and assigned type 1 and 2, respectively. Sleeping particles are then activated when their initial site is hit by an active particle. If the site is discovered by a type $i$ particle, the particles are assigned type $i$. If the site is discovered simultaneously by type 1 and 2, the type(s) of its particles is determined by an arbitrary fixed tie-breaker rule. Activated particles move according to their associated random walks and delay variables: A particle $(x,j)$ that has made $n\geq 0$ jumps is located at $S^{x,j}_n$, with $S^{x,j}_0=x$. Assume that the particle is of type $i$ and arrived at $S^{x,j}_n$ at time $t$. Its next move, to $S^{x,j}_{n+1}$, then occurs at time $t+k$ if and only if $L^{x,j}_{n,m}>p_i$ for all $m<k$ and $L^{x,j}_{n,k}\leq p_i$. The particle hence stays at each site for a geometrically distributed number of time steps with parameter $p_i$ and then moves to the next position stipulated by its random walk.

Let $S =\{ (S^{x,j})_{n\in \mathbb{N}}:\ x\in \mathbb{Z}^d,\ j\geq 1\}$ and $L = \{ (L^{x,j}_{n,k})_{n,k\in \mathbb{N}}:\ x\in \mathbb{Z}^d,\ j\geq 1\}$. Write $\Pi^{\sss A,B}$ for a two-type process started from arbitrary particle configurations in the sets $A$ and $B$, as described above, and $\PP^{\sss A,B}$ for the associated probability measure. The full state at time $n$, including the location, type and origin of all particles, is denoted by $\Pi^{\sss A,B}_n$ and the set of discovered sites at time $n$ is denoted by $\xi^{\sss A,B}_n$. Note that $S$ and $L$, together with an initial configuration, can also be used to generate a one-type process started from some arbitrarily populated bounded set $A$ with particles jumping independently with probability $p\in(0,1]$. Such a process and its state at time $n$ is denoted by $\Pi^{\sss A}$ and $\Pi^{\sss A}_n$, respectively, and the set of discovered sites at time $n$ is denoted by $\xi^{\sss A}_n$. A continuum version of the set of discovered sites is given by $\Xi^{\sss A}_n:=\{x+(\frac{1}{2},\frac{1}{2}]:x\in\xi^{\sss A}_n\}$. When the whole particle configuration, including the initial set(s), is drawn from $\nu$, we equip the notation with a wiggle-hat and write $\tilde{\Xi}^{\sss A}_n$, $\tilde{\Pi}^{\sss A}$, $\tilde{\Pi}^{\sss A,B}$ etc.\medskip

\noindent\textbf{Results}

Write $\n=(n,\ldots, 0)$ and consider a one-type process started from the origin $\0$ with the whole initial particle configuration, including the origin, drawn from $\nu$. The set of discovered sites then grows linearly in time. Specifically, for any $p\in (0,1]$ and any $\nu$, conditional on that $\eta(\0)\geq 1$, there exists a non-empty convex set $\mathcal{A}=\mathcal{A}(\nu,p)$ such that for any $\vep \in (0,1)$ almost surely
\begin{equation}\label{eq:shape}
(1-\vep)n\mathcal{A} \subset \tilde{\Xi}^{\textbf{0}}_n \subset (1+\vep)n\mathcal{A}
\end{equation}
for large $n$. This was proved in \cite{frogs_shape} for a non-lazy process with $\eta\equiv 1$ and generalized to other initial distributions in \cite{frogs_shape_random}. The minor additions needed to get the result for a lazy process are described in \cite{comp_frogs}. The shape $\cA$ inherits all symmetries of $\Zd$ and, since the growth occurs in discrete time, $\cA$ cannot exceed the $L_1$ unit-ball.  Apart from this, it is presumably difficult to characterize $\cA$ in general. See however \cite[Theorem 1.2]{frogs_shape} and \cite[Theorem 1.3]{frogs_shape_random} for partial results. Figure \ref{fig} shows a simulation picture of the time-scaled discovered set for $p=1$ when starting with one particle per site.

Our first result is that the set of discovered sites in a one-type process converges to the same shape regardless of the starting set and the configuration there.

\begin{prop}\label{prop:shape}
Consider a one-type process started from an arbitrary non-empty configuration in a bounded set $A$ and with the rest of the particle configuration drawn from $\nu$. For any $p\in (0,1]$, any $\nu$ and any $\vep \in (0,1)$, almost surely
\begin{equation}\label{eq:our_shape}
(1-\vep)n \mathcal{A} \subset \Xi^{\sss A}_n\subset (1+\vep)n \mathcal{A}
\end{equation}
for large $n$, where $\mathcal{A}=\mathcal{A}(\nu,p)$ is the same set as in \eqref{eq:shape}.
\end{prop}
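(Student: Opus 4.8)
The plan is to sandwich $\xi^{\sss A}_n$ between two single-source processes drawn from $\nu$, exploiting monotonicity in the initial data together with the fact that a finite discrepancy in the configuration affects the discovered set by only an asymptotically negligible amount. The basic tool is a monotone coupling: if two one-type processes are built from the same walks $S$ and delays $L$, and one has a pointwise larger configuration (coupling the first particles at each site) together with a larger set of sites activated at time $0$, then at every time its discovered set contains that of the other and every site is discovered no later. This follows by the standard induction over the activation genealogy, using that the waiting time of a given particle for its $n$th jump is intrinsic to $(x,j,n)$ and does not depend on when the particle was activated; hence a particle activated $\Delta$ units earlier performs its entire future trajectory exactly $\Delta$ units earlier. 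I will combine this with the single-source shape theorem \eqref{eq:shape} and its translates: for any fixed occupied source $y$ the full $\nu$-process from $y$ has shape $\cA$, and since $\cA$ is convex, symmetric and non-empty it is full-dimensional, so such a process a.s.\ discovers any fixed bounded set in finite time.

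For the upper inclusion, let $R$ be such that $A\subseteq\{x:\|x\|_\infty\le R\}=:B$, and dominate $\Pi^{\sss A}$ by the process $\Pi^{+}$ that activates all of $B$ at time $0$ and carries the pointwise maximum of the given configuration and $\nu$ on $B$, and $\nu$ off $B$; by the coupling above $\xi^{\sss A}_n\subseteq\xi^{+}_n$. Writing $\sigma_R$ for the a.s.\ finite time at which $\tilde{\Pi}^{\0}$ first discovers all of $B$, the head-start coupling shows that pre-activating $B$ advances every discovery time by at most $\sigma_R$, so the full-$\nu$ part of $\Pi^{+}$ is contained in $\tilde{\Xi}^{\0}_{n+\sigma_R}$. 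The finitely many excess particles of $\Pi^{+}$ over $\nu$ each spawn, when activated, a process dominated by a full $\nu$-process from a point of $B$, of which there are finitely many and each has shape $\cA$. Taking the union and using $(1+\tfrac{\vep}{2})(n+\sigma_R)\cA\subseteq(1+\vep)n\cA$ for large $n$ yields $\xi^{\sss A}_n\subseteq(1+\vep)n\cA$.

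For the lower inclusion, fix a site $a_1\in A$ carrying at least one particle. Discarding every other activation and particle can only shrink the discovered set, so $\xi^{\sss A}_n$ contains the discovered set of the process $\Pi^{-}$ that activates a single particle at $a_1$ and carries $\nu$ on $A^c$ and nothing on $A\setminus\{a_1\}$. The lone particle a.s.\ reaches, at some finite time $\tau$, an occupied site $y\in A^c$; from $(y,\tau)$ onward $\Pi^{-}$ dominates a full $\nu$-process from $y$ with the finite set $A$ deleted from its supply. The crux is then the claim that deleting a finite set of supplies does not reduce the shape, which I would establish by showing it increases every passage time by an a.s.\ finite amount $C$, routing the activation of a distant $x$ around the bounded obstacle $A$ at bounded extra cost once the cluster has escaped $B$. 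Granting this, the restarted process contains $(1-\tfrac{\vep}{2})(n-\tau-C)\cA+y$, which for large $n$ contains $(1-\vep)n\cA$.

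The main obstacle is this last finite-deletion estimate for the lower bound: unlike pre-activation, which the head-start coupling controls exactly, removal of supplies slows growth, and one must show the slowdown is uniformly bounded, i.e.\ that the growing cluster circumvents the finite obstacle $A$ with only an $O(1)$ delay. This is the frog-model analogue of the standard fact that a finite modification does not change the time constant in first-passage percolation, and I expect it to be where the real work lies; full-dimensionality of $\cA$, which guarantees that bounded sets are engulfed in finite time, is what makes the restart arguments on both sides go through.
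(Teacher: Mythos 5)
There are two genuine gaps, one in each inclusion, and both trace back to the same missing idea. \emph{Upper bound:} the step ``each excess particle spawns, when activated, a process dominated by a full $\nu$-process from a point of $B$'' is not a valid domination. An excess particle carries its own random walk, which simply does not exist in the $\nu$-process from its site; monotonicity only goes the other way (the process \emph{with} the extra particle dominates the one without, never conversely). For an a priori uncontrolled stretch of time this particle can run ahead of the $\nu$-cluster, wake sleeping $\nu$-particles early, and those genealogies inherit the head start; nothing in your sketch bounds this discrepancy. (In addition, splitting $\xi^{+}_n$ into a ``full-$\nu$ part'' and ``excess parts'' already requires a genealogy-decomposition lemma that you use tacitly; it is true, but it is not the same thing as your monotone coupling.) \emph{Lower bound:} you correctly isolate the finite-deletion claim as the crux, but the proposed FPP-style remedy --- rerouting around the obstacle at $O(1)$ extra cost --- does not transfer to the frog model: deleting supplies is not a local perturbation of finitely many passage times that a detour can compensate; it removes particles, and all of their would-be descendants, from the entire future of the process.

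The single observation that closes both gaps is the paper's Lemma \ref{le:finite}: each particle is diffusive (by Lemma \ref{le:RW} it is eventually confined to $\cD(n^{3/4})$) while the discovered set is ballistic (it eventually contains $\cD(\delta n)$), so any given particle a.s.\ discovers only finitely many sites. Hence all particles originating in a fixed box $\Sigma\supset A\cup\{\0\}$ become irrelevant after an a.s.\ finite time, which yields two delayed couplings: $\tilde{\xi}^\0_{n-N}\subset\xi^{\sss A}_n$ for the lower inclusion (the particles your deletion removes contribute only the finite set $\tilde{\xi}^\0_{N_{\sss\Sigma}}$ to $\tilde{\Pi}^\0$, and one waits a finite time $N$ for $\Pi^{\sss A}$ to cover it), and $\xi^{\sss A}_n\subset\tilde{\xi}^\0_{n+N'}$ for the upper inclusion (the excess particles contribute only a finite set to $\Pi^{\sss A}$). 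Crucially, the ballistic lower bound needed to outrun the excess particles in $\Pi^{\sss A}$ is the lower inclusion itself, so the argument must establish the lower bound first and feed it into the upper bound (this is exactly the paper's Remark 2.1); your proposal inverts this order, treating the upper inclusion as routine and locating all the difficulty in the lower one. Your monotone coupling, the head-start coupling, and the restart at an occupied site are all sound and compatible with the correct argument, but without the particles-get-outrun lemma neither inclusion closes.
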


Now consider a two-type process started from $\0$ and $\1$ with the whole initial particle configuration drawn from $\nu$. Write $G_i$ for the event that type $i$ activates infinitely many particles and $C=G_1\cap G_2$ for the event that the types coexist by doing so simultaneously. In \cite{comp_frogs} it is shown that, if either $\eta(x)\geq 1$ almost surely or $\E[\eta(x)]<\infty$ for any $x\in\Zd$, then $C$ has strictly positive probability when $p_1=p_2\in(0,1]$. The condition on $\nu$ is of technical nature, and presumably not necessary for the conclusion. We extend the conclusion to a two-type processes started in an arbitrary way, by showing that the possibility of coexistence does not depend on the choice of the initial sets.

\begin{figure}\label{fig}
	\centering
	\includegraphics[width=7cm, height=7cm]{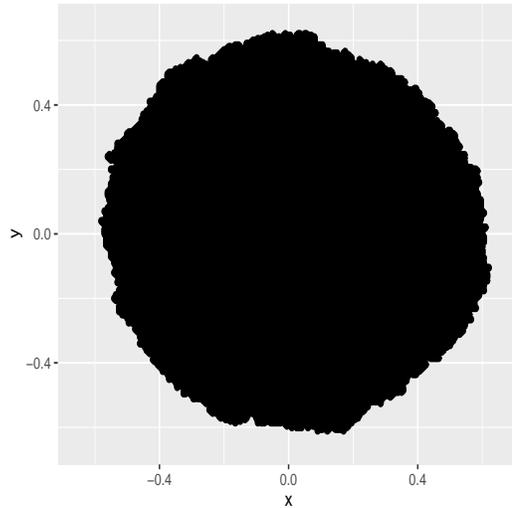}
	\caption{The set $\tilde{\Xi}^\0_n/n$ for $n=500$ with $\eta(x)\equiv 1$ and $p=1$.}
\end{figure}

\begin{theorem}\label{th:coex}
Let $(A,B)$ and $(A',B')$ be two pairs of disjoint bounded subsets of $\Zd$ with arbitrary non-empty particle configurations. For two-type processes started from these sets with the same jump probabilities $p_1,p_2\in(0,1)$, we have that
$$
\PP^{A,B}(C)>0 \Leftrightarrow \PP^{A',B'}(C)>0.
$$
\end{theorem}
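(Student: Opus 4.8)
The plan is to prove the two implications separately; since the roles of $(A,B)$ and $(A',B')$ are interchangeable, it suffices to show that $\PP^{A,B}(C)>0$ implies $\PP^{A',B'}(C)>0$. I would realize both processes on one probability space, driven by the same families $S$ and $L$ and the same field drawn from $\nu$ on the complement of $A\cup B\cup A'\cup B'$, so that the only genuine difference lies in the finite initial data. The guiding idea is that coexistence is a feature of the long-run competition between the two fronts and should be insensitive to a finite, positive-probability manipulation of the early dynamics; the task is to turn this heuristic into a forcing-plus-Markov argument.

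First I would record a soft consequence of $\PP^{A,B}(C)>0$. Writing $(\mathcal{F}_n)$ for the natural filtration, $C$ is determined by the full evolution, so $\E[\PP^{A,B}(C\mid\mathcal{F}_{T_0})]=\PP^{A,B}(C)>0$ for every $T_0$. Hence there is a $\delta>0$ and, for a suitable $T_0$, an $\mathcal{F}_{T_0}$-measurable event $\mathcal{H}$ of positive probability on which $\PP^{A,B}(C\mid\mathcal{F}_{T_0})\ge\delta$; using that the discovered region grows to fill space (as in Proposition \ref{prop:shape}) to guarantee that a fixed box $Q\supseteq A'\cup B'$ is discovered in finite time, I may also require on $\mathcal{H}$ that the type-$1$/type-$2$ partition of the discovered region already covers $Q$. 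Thus there exists a reachable time-$T_0$ configuration $\kappa$ --- a partition of a region containing $Q$ into the two types, a multiset of active particles of each type with their positions and delay phases, and the untouched sleeping field outside --- from which the original process coexists with probability at least $\delta$.

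The heart of the argument is then to show that the $(A',B')$ process reaches, in finite time and with positive probability, a configuration from which coexistence still has positive conditional probability. I would do this by forcing: the finitely many particles initially active in $A'$ and $B'$, together with those they wake, can be made to follow any prescribed finite collection of trajectories with positive probability, so I can steer them to reproduce on $Q$ exactly the type partition prescribed by $\kappa$, while the shared field of $\nu$ makes the undiscovered exterior automatically agree. The surplus active particles generated while the $(A',B')$ front ``catches up'' to $\kappa$ are the difficulty: these cannot be made to vanish, so instead I would use the strict laziness $p_1,p_2\in(0,1)$ --- exactly the hypothesis absent from Proposition \ref{prop:shape} --- to hold each surplus particle idle inside the territory of its own type, so that during the relevant window it only ever re-treads sites already of that type and cannot influence the competition along the interface. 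Conditioning on this positive-probability forcing event and applying the Markov property at the end of the window would then give $\PP^{A',B'}(C)\ge(\text{forcing probability})\cdot\delta'>0$ for some $\delta'>0$.

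The main obstacle, which I expect to absorb most of the work, is precisely the comparison in the last step: the two-type dynamics are not monotone, since adding active particles of one type helps that type suppress the other, so one cannot simply enlarge territories and invoke a stochastic domination. The delicate point is to verify that a configuration agreeing with $\kappa$ on its type partition and exterior, but carrying a controlled surplus of idle same-type particles, still coexists with positive probability --- in other words, that the coupling reproduces the coexistence event itself and not merely the geometry of the fronts. Making the confinement of the surplus particles quantitative, and checking that it survives being combined with the conditional coexistence guarantee coming from $\kappa$, is where the argument must be carried out with care.
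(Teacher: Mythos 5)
Your overall skeleton --- reduce to one implication by symmetry, force finitely many particles of the $(A',B')$ process to recreate a good configuration of the $(A,B)$ process, then apply a Markov-type argument --- is the same as the paper's, and you correctly isolated the crucial obstacle: the two-type dynamics are not monotone, so the extra active particles created while the $(A',B')$ process "catches up" must be shown to be harmless. But your proposed mechanism for this, holding each surplus particle idle inside its own type's territory via laziness "during the relevant window," cannot close the gap. A forcing event has positive probability only if it prescribes the behavior of finitely many particles over a \emph{finite} time horizon, whereas coexistence is an event about the infinite future: once the window ends the surplus particles move freely forever, and a single one of them reaching undiscovered territory at an arbitrarily late time can change which sites are activated by which type. (Confinement to same-type territory is also not the relevant condition: active particles do not interact, so the only way any particle can influence the competition is by \emph{discovering} a new site; a surplus particle wandering through opposite-type discovered territory is equally harmless.) Moreover, your plan addresses only the surplus and ignores the symmetric deficit problem: the active particles of your configuration $\kappa$ that originated in $A\cup B$ have no counterpart in the $(A',B')$ process and cannot be created by forcing, so you must also argue that their absence does not reduce either type's set of activations.

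The missing idea --- and the actual heart of the paper's proof --- is a quantitative linear-versus-diffusive comparison that makes every particle originating in a fixed box $\Sigma\supset A\cup B\cup A'\cup B'$ \emph{permanently} irrelevant after a late enough coupling time, whether it is in surplus or in deficit. By Lemma \ref{le:2tlb} the two-type discovered set eventually contains $\frac{n}{2}\cA(\nu,p_1)\supset\cD(n^{3/4})$, while by Lemma \ref{le:RW} every particle started in $\Sigma$ eventually stays inside $\cD(n^{3/4})$. One can therefore fix a deterministic $m$ such that, with positive probability, simultaneously: coexistence holds in $\Pi^{A,B}$; by time $m$ no particle of $\Pi^{A,B}$ originating in $\Sigma$ will ever discover another site (Lemma \ref{le:finite} and Remark 2.1); the discovered set contains $\cD(n^{3/4})$ for all $n\geq m$; and an independent family of $\Sigma$-particles never exits $\cD(n^{3/4})$ after time $m$. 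The forcing step then recreates on $\Sigma^c$ the exact time-$m$ state of $\Pi^{A,B}$ (positions, types, activation status), parks the $\Sigma$-particles at their initial sites, and afterwards couples $\Sigma^c$ to the post-$m$ randomness of $\Pi^{A,B}$: the coupled discovered set then dominates that of $\Pi^{A,B}_{m+t}$ for all $t$, so all $\Sigma$-particles of either process are trapped forever inside already-discovered territory and activate nothing. This one estimate disposes of surplus and deficit at once; without it (or something equivalent), your final step of "conditioning on the forcing event and applying the Markov property" does not yield $\PP^{A',B'}(C)>0$, because the configuration reached is not one from which coexistence is known to have positive probability.
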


The corresponding result with the initial sets restricted to single sites and the whole particle configuration, including the initial sites, drawn from $\nu$ was shown in \cite[Proposition 1.2]{comp_frogs}. Although we expect the conclusion to be true for all values of $p_1$ and $p_2$, neither \cite[Proposition 1.2]{comp_frogs} nor Theorem \ref{th:coex} cover the case when $p_1=1$ or $p_2=1$. This is because the proofs are based on coupling arguments where it is required that particles can stay put in a given time step. See however \cite[Lemma 3.1]{comp_frogs} for partial results for single site initial sets.

Further open problems include studying coexistence issues for unbounded initial sets. Theorem \ref{th:coex} is restricted to bounded initial sets while, for unbounded initial sets, the choice of initial sets could potentially affect the possibility of coexistence; see \cite{unbounded} for related results for competing first passage percolation. For bounded initial sets, the possibility of coexistence is conjectured to be determined by the relation between the one-type shapes $\cA(\nu,p_1)$ and $\cA(\nu,p_2)$ of the two types; see \cite{comp_frogs}. This means that it would be interesting to investigate how laziness affects the shape. When $\nu$ has a very heavy tail, it is known that the shape is unaffected by laziness in the sense that it is given by the $L_1$ unit ball (the maximal shape) for all value of $p_i$; see \cite[Theorem 1.3]{comp_frogs}. However, under suitable conditions on the tail of $\nu$, we conjecture that $\cA(\nu, p_1)$ is a strict subset of $\cA(\nu,p_2)$ when $p_1<p_2$.

We give the short proof of Proposition \ref{prop:shape} in Section 2, followed by the proof of Theorem \ref{th:coex} in Section 3.

\section{Proof of Proposition \ref{prop:shape}}

Let $S_n$ be a simple symmetric random walk on $\Zd$ started from an arbitrary point. It is well-known that the distance from the starting point after $n$ steps scales like $\sqrt{n}$. In particular, for any $\alpha\in(\frac{1}{2},1)$, the probability that the distance exceeds $n^\alpha$ is at most $\exp\{-cn^{2\alpha-1}\}$. This follows from standard results in the theory of moderate deviations and is formulated e.g.\ in \cite[Lemma 2.1]{comp_frogs}. Combining it with the Borel-Cantelli lemma yields the following fact, where $\cD(r)$ denotes the $L_1$-ball with radius $r$, that is, $\cD(r) = \{x\in \mathbb{R}^d:\ ||x||_1 \leq r \}$.

\begin{lemma}\label{le:RW}
For any $\alpha\in(\frac{1}{2},1)$, almost surely $S_n\in\cD(n^\alpha)$ for large $n$.
\end{lemma}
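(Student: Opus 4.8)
The plan is to prove Lemma~\ref{le:RW} exactly as the surrounding text suggests: combine the stated moderate-deviation tail bound for a single random walk with the Borel--Cantelli lemma. I would first record the relevant bound. Let $S_n$ be a simple symmetric random walk started from an arbitrary point $x_0$, and write $\|S_n-x_0\|_1$ for the displacement after $n$ steps. By the moderate-deviation estimate cited from \cite[Lemma 2.1]{comp_frogs}, for any $\alpha\in(\tfrac12,1)$ there is a constant $c>0$ such that
\begin{equation}\label{eq:mdtail}
\PP\big(\|S_n-x_0\|_1 > n^\alpha\big)\leq \exp\{-cn^{2\alpha-1}\}
\end{equation}
for all large $n$. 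Since $S_n\in x_0+\cD(n^\alpha)$ is equivalent to $\|S_n-x_0\|_1\leq n^\alpha$, the event that the lemma's conclusion fails at time $n$ is precisely the complement appearing in \eqref{eq:mdtail}.

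Next I would apply Borel--Cantelli. Because $\alpha>\tfrac12$ we have $2\alpha-1>0$, so the right-hand side of \eqref{eq:mdtail} decays stretched-exponentially in $n$, and in particular
$$
\sum_{n}\PP\big(\|S_n-x_0\|_1 > n^\alpha\big)\leq \sum_n \exp\{-cn^{2\alpha-1}\}<\infty.
$$
By the first Borel--Cantelli lemma, almost surely only finitely many of the events $\{\|S_n-x_0\|_1>n^\alpha\}$ occur. Equivalently, almost surely $\|S_n-x_0\|_1\leq n^\alpha$, i.e.\ $S_n\in x_0+\cD(n^\alpha)$, for all sufficiently large $n$. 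Absorbing the fixed starting point $x_0$ into the statement (or simply taking $x_0=\0$, the general case following by translation) gives exactly the claim $S_n\in\cD(n^\alpha)$ for large $n$, almost surely.

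There is essentially no obstacle here: the one quantitative input, the tail bound \eqref{eq:mdtail}, is quoted as a known result, and the remainder is a routine summability-plus-Borel--Cantelli argument. The only point deserving a word of care is the role of the starting point: the displacement $\|S_n-x_0\|_1$ is what scales like $\sqrt n$, so the natural statement is that $S_n$ eventually lies in a ball of radius $n^\alpha$ centred at the starting point. For a walk started at the origin this is literally $S_n\in\cD(n^\alpha)$, and a translation handles an arbitrary start, so no generality is lost in phrasing the lemma as stated.
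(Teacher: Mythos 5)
Your proof is correct and follows exactly the route the paper intends: the moderate-deviation tail bound from \cite[Lemma 2.1]{comp_frogs} gives a summable sequence of failure probabilities, and the first Borel--Cantelli lemma concludes; your extra care with the arbitrary starting point (translating, or shrinking $\alpha$ slightly to absorb $\|x_0\|_1$) is a sound handling of a detail the paper glosses over.
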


In \cite[Lemma 2.2]{comp_frogs} this is then combined with the shape theorem to conclude that a given particle discovers almost surely finitely many sites. Indeed, since the asymptotic shape $\cA$ is non-empty and convex, we have that $\cA\supset \cD(\delta)$ for some $\delta>0$ and hence the (continuum version of the) set of discovered sites at time $n$ in the one-type model almost surely contains $\cD(n\delta/2)$ for large $n$. Combining this with Lemma \ref{le:RW}, it follows that after some finite time a given particle cannot discover any new sites. A two-type process can be bounded from below by a one-type process consisting of only the slower type, leading to the same conclusion for the two-type model.

\begin{lemma}\label{le:finite}
In both the one-type model $\tilde{\Pi}^{\0}$ and the two-type model $\tilde{\Pi}^{\0,\1}$, the number of sites discovered by a given particle is almost surely finite.
\end{lemma}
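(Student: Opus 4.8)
The plan is to exploit the tension between two facts: once activated, a single particle wanders only a sublinear distance $n^\alpha$ (with $\alpha<1$) from its starting site by Lemma \ref{le:RW}, whereas the set of discovered sites expands at a linear rate by the shape theorem \eqref{eq:shape}. Consequently a given particle is eventually engulfed by the growing discovered region and can no longer reach an undiscovered site, so it can create only finitely many discoveries.

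For the one-type process $\tilde\Pi^{\0}$ I would argue as follows. On $\{\eta(\0)=0\}$ no particle ever moves and the claim is trivial, so I assume $\eta(\0)\ge1$ and invoke \eqref{eq:shape}. As observed in the paragraph preceding the lemma, $\cA$ being non-empty and convex (and symmetric) gives $\cA\supseteq\cD(\delta)$ for some $\delta>0$; taking $\vep=\tfrac12$ in \eqref{eq:shape} then yields, almost surely for large $n$,
\[
\tilde\Xi^{\0}_n \supseteq \tfrac{1}{2}n\cA \supseteq \cD(n\delta/2),
\]
so every site within $L_1$-distance $n\delta/2$ of the origin is discovered by time $n$. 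Now fix a particle $(x,j)$ and fix $\alpha\in(\tfrac12,1)$. Since the lazy dynamics produce at most one jump per time step, at time $n$ the particle sits at $S^{x,j}_m$ for some $m\le n$, and Lemma \ref{le:RW} gives $\|S^{x,j}_m-x\|_1\le m^\alpha\le n^\alpha$ for all large $m$ (the finitely many small indices contribute only a bounded term). Hence the particle's position at time $n$ lies in $\cD(\|x\|_1+n^\alpha+C)$ for some finite random $C$. Because $\alpha<1$, for all large $n$ we have $\|x\|_1+n^\alpha+C<n\delta/2$, so the particle's current site is already discovered and it cannot create a new discovery. Thus all of its discoveries occur before some finite random time $T$, and there are at most $T$ of them.

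For the two-type process $\tilde\Pi^{\0,\1}$ the only additional ingredient is a linear lower bound on the discovered set. Here I would couple, using the same walks $S$ and delays $L$, the two-type process to the one-type process in which every particle jumps with the slower parameter $p_-=\min(p_1,p_2)$. Since the delay rule makes the waiting time at each site monotone in the jump probability, every particle in the two-type process performs each of its jumps no later than its counterpart in the slow one-type process; a joint induction on the activation order then shows that every site discovered by time $n$ in the slow one-type process is also discovered by time $n$ in the two-type process. The slow one-type process still obeys \eqref{eq:shape} with shape $\cA(\nu,p_-)\supseteq\cD(\delta_-)$, so the two-type discovered set likewise contains $\cD(n\delta_-/2)$ for large $n$, and the random-walk argument above applies verbatim. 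The main obstacle is making this coupling precise: one must check that the monotonicity of jump times propagates correctly through the recursively defined activation times, and that the competition and tie-breaking in the two-type model are irrelevant for the set of discovered sites, which ignores type. Both points are routine but require some care in setting up the induction.
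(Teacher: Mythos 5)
Your argument follows the paper's own route: combine the linear lower bound $\tilde{\Xi}^{\0}_n\supset\cD(n\delta/2)$ coming from the shape theorem \eqref{eq:shape} with the sublinear displacement bound of Lemma \ref{le:RW}, so that each particle is eventually trapped inside the already-discovered region, and reduce the two-type case to a slow one-type process. The one-type half is correct as written (modulo a harmless off-by-one: to rule out that the particle itself is the discoverer, compare its position at time $n+1$ with the set already discovered at time $n$; this is the same level of informality as the paper's own sketch).

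There is, however, one point in the two-type half where your logical ordering breaks down. You couple $\tilde{\Pi}^{\0,\1}$ with a slow one-type process having \emph{the same} initial configuration, i.e.\ started from both $\0$ and $\1$, and then assert that this process ``still obeys \eqref{eq:shape}''. But \eqref{eq:shape} is only available for a one-type process started from a single site with the whole configuration drawn from $\nu$; its extension to a two-site initial set is precisely a special case of Proposition \ref{prop:shape}, whose proof uses the present lemma. This is exactly the ordering issue the paper flags in Remark 2.1, so as written your two-type argument is circular. The fix costs nothing: run the slow comparison process from one site only, say $\0$ (or $\1$ if $\eta(\0)=0$), with $p_-=\min(p_1,p_2)$ and with the particles at the other site left asleep at time $0$. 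Your monotonicity induction applies verbatim --- lowering the jump probability delays every jump, and removing initially active particles delays every activation --- so $\tilde{\xi}^{\0}_n(p_-)\subset\tilde{\xi}^{\0,\1}_n(p_1,p_2)$, and \eqref{eq:shape} applies to the left-hand process directly, which is how the paper's phrase ``a one-type process consisting of only the slower type'' should be implemented at this stage.
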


\textbf{Remark 2.1.} Note that Lemma \ref{le:finite} so far only applies to processes started with the whole initial configuration drawn from $\nu$ and started from the origin (one-type case) or the origin and a neighbor (two-type case). This is because the proof relies on the (lower bound in) the shape theorem, as described above, and so far we have the shape theorem only for such a process.  Once (the lower bound in) Proposition \ref{prop:shape} is established, we will be able to apply the lemma also to processes $\Pi^{\sss A}$ and $\Pi^{\sss A,B}$ started from arbitrary configurations in arbitrary initial set(s).

\begin{proof}[Proof of Proposition \ref{prop:shape}]
Let $\Sigma$ be a box large enough to contain $\textbf{0}$ and $A$ and couple the processes $\tilde{\Pi}^\0$ and $\Pi^{\sss A}$ so that they are controlled by the same randomness on $\Sigma^c$ but by independent quantities on $\Sigma$. Specifically they have the same initial particle configurations on $\Sigma^c$ and the particles there are controlled by the same random walks and delay variables, while on $\Sigma$ the particle configurations are independent and controlled by independent randomness. Let $N_{\sss \Sigma}$ be the last time in $\tilde{\Pi}^\0$ when a particle originating from $\Sigma$ discovers a new site, where $N_{\sss \Sigma}$ is also taken large enough to ensure that all sites in $\Sigma$ are discovered. By Lemma \ref{le:finite} and \eqref{eq:shape}, this time is almost surely finite. Then consider the process $\Pi^{\sss A}$ and define $N=\min\{n:\xi^{\sss A}_n\supset \tilde{\xi}^\0_{\tilde{N}_{\sss \Sigma}}\}$. We claim that
$$
\tilde{\xi}^\0_{n-N} \subset \xi^{\sss A}_{n}\textrm{ for all } n>N,
$$
that is, if $y\in\tilde{\xi}^\0_{n-N}$ for $n>N$, then $y\in \xi^{\sss A}_{n}$. If $y$ is discovered by a particle originating from $\Sigma$ in $\tilde{\Pi}^\0$ this is clear, since all such sites belong to $\xi^{\sss A}_N$ by the definition of $N$. If $y$ is discovered by a particle in $\Sigma^c$ in $\tilde{\Pi}^\0$, we note that all such particles will be further along their random walk trajectories in $\Pi^{\sss A}_n$ ($n>N$) compared to $\tilde{\Pi}^\0_{n-N}$ -- this follows from the fact that $\tilde{\xi}^\0_N\subset \xi^{\sss A}_N$ and the definition of $N$, and gives the desired implication. Now, pick $\vep \in (0,1)$ and $\delta\in(0,\vep)$. Using \eqref{eq:shape}, we get that
\begin{equation}\label{eq:shape_lower}
(1-\vep)n\cA \subset (1-\delta)(n-N)\cA \subset \tilde{\xi}^\0_{n-N} \subset \xi^{\sss A}_{n},
\end{equation}
almost surely for large $n$.

The second inclusion in \eqref{eq:our_shape} follows from a similar argument. Let $N'_{\sss \Sigma}$ be the last time in $\Pi^A$ when a particle originating from $\Sigma$ discovers a new site, where $N'_{\sss \Sigma}$ is also taken large enough to ensure that all sites in $\Sigma$ are discovered. Since we now have a lower linear bound \eqref{eq:shape_lower} for $\xi^{\sss A}_{n}$, it follows from Lemma \ref{le:finite} and Remark 2.1 that this time is almost surely finite. Define $N'=\min\{n:\tilde{\xi}^\0_n\supset \xi^A_{N'_{\sss \Sigma}}\}$. By the same argument as above, we have that $\xi^{\sss A}_n \subset \tilde{\xi}^\textbf{0}_{N'+n}$ and hence, for any $\vep>0$, we get from \eqref{eq:shape} that
\begin{equation}
\xi^A_{n} \subset \tilde{\xi}^\textbf{0}_{N'+n}\subset (1+\epsilon)n\mathcal{A}
\end{equation}
almost surely for large $n$.
\end{proof}

\section{Proof of Theorem \ref{th:coex}}
		
We first observe the simple fact that the discovered set in a two-type process can be bounded from below by the one-type shape of the slower type. Here we include the jump probabilities in the notation for the discovered set.

\begin{lemma}\label{le:2tlb}
Fix $\nu$ and $p_1\leq p_2\in(0,1]$. Then, for any initial sets $(A,B)$ and any $\vep\in (0,1)$, we have that $\Xi^{\sss A,B}_n(p_1,p_2)\supset (1-\vep)n\cA(\nu,p_1)$ almost surely for large $n$.
\end{lemma}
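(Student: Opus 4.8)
The plan is to couple the two-type process $\Pi^{\sss A,B}(p_1,p_2)$ with a one-type process consisting only of the slower type, and then apply Proposition \ref{prop:shape} to the latter. The key observation is that every site discovered in the one-type slow process is also discovered (at the same time or earlier) in the two-type process, so the two-type discovered set dominates the one-type discovered set, whose asymptotics are already controlled by Proposition \ref{prop:shape}.

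Concretely, I would construct an auxiliary one-type process $\Pi^{\sss A\cup B}$ with jump probability $p_1$, using the same random walks $S$ and the same delay variables $L$ as the two-type process, and with the same initial particle configuration on $A \cup B$ (ignoring the type labels). First I would verify the domination $\xi^{\sss A\cup B}_n \subset \xi^{\sss A,B}_n(p_1,p_2)$ for all $n$, by induction on $n$. The point is that a particle activated in the one-type process is activated no later in the two-type process: in the two-type process every active particle moves with jump probability at least $p_1$ (type 1 particles move with probability $p_1$, type 2 particles with the larger probability $p_2$), so relative to the slow one-type process with common probability $p_1$, particles in the two-type process are never behind along their shared walk trajectories $S^{x,j}$. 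Hence activation times in the two-type process are dominated by those in the one-type process, giving the set inclusion. I would phrase this via the delay variables: with a shared $L^{x,j}_{n,k}$, the waiting time at a site is nonincreasing in the jump probability, which makes the coupling monotone.

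Having established $\xi^{\sss A\cup B}_n \subset \xi^{\sss A,B}_n(p_1,p_2)$, I apply Proposition \ref{prop:shape} to the one-type process started from the bounded set $A\cup B$ (with its arbitrary non-empty configuration) and jump probability $p_1$: it gives $(1-\vep)n\cA(\nu,p_1) \subset \Xi^{\sss A\cup B}_n$ almost surely for large $n$. Combining the two inclusions yields $(1-\vep)n\cA(\nu,p_1) \subset \Xi^{\sss A,B}_n(p_1,p_2)$ for large $n$, as desired. Note that this uses only the lower bound in Proposition \ref{prop:shape}, and that the hypothesis only requires $p_1\le p_2$ with $p_1\in(0,1]$, so the statement covers $p_2=1$ as well.

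The main obstacle is making the monotone coupling precise, specifically checking the domination rigorously with the shared delay variables when particles of the two types advance at different rates. Care is needed because in the two-type process an individual particle's type (and hence its jump probability) is determined dynamically by which type reaches its site first, whereas in the one-type process every particle uses $p_1$ throughout. The cleanest way around this is to fix, for each particle $(x,j)$, the same walk $S^{x,j}$ in both processes and compare activation times inductively: if a site is reached by time $m$ in the slow one-type process, then since each relevant particle moves at rate at least $p_1$ in the two-type process and shares its walk, the same site is reached by time $m$ there too. I would organize the induction over the discovery times rather than over particles, which sidesteps the need to track types explicitly and reduces the argument to the elementary monotonicity of geometric waiting times in the jump probability.
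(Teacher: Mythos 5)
Your proposal is correct and follows essentially the same route as the paper: the paper's proof also couples the two-type process with a one-type process $\Pi^{\sss A\cup B}$ at jump probability $p_1$ built from the same $S$ and $L$, asserts the domination $\xi^{\sss A\cup B}_n(p_1)\subset \xi^{\sss A,B}_n(p_1,p_2)$ (which the paper attributes simply to ``the construction of the model''), and concludes via Proposition \ref{prop:shape}. Your elaboration of the monotonicity of the geometric waiting times in the jump probability is exactly the justification the paper leaves implicit.
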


\begin{proof}
Consider a one-type process $\Pi^{A\cup B}$ with the same initial particle configuration as the two-type process, constructed based on the same random elements $S$ and $L$, and with jump probability $p_1(\leq p_2)$. By the construction of the model, we have that $\xi^{\sss A\cup B}_n(p_1)\subset \xi^{\sss A,B}_n(p_1,p_2)$ and the statement then follows from Proposition \ref{prop:shape}.
\end{proof}

\begin{proof}[Proof of Theorem \ref{th:coex}]
We show that, if $\PP^{\sss A,B}(C)>0$, then $\PP^{\sss A',B'}(C)>0$ as well. To this end, as in the proof of Proposition \ref{prop:shape}, let $\Sigma$ be a box large enough to contain $A\cup B\cup A'\cup B'$ and define the following times for the process started from $(A,B)$:

\begin{itemize}
\item[-] Let $M_{\sss \Sigma}$ be the last time in $\Pi^{\sss A,B}$ when a particle with initial location in $\Sigma$ discovers a new site, where $M_{\sss \Sigma}$ is also taken large enough to ensure that all sites in $\Sigma$ are discovered. It follows from Lemma \ref{le:finite}, Remark 2.1 and Proposition \ref{prop:shape} that $M_{\sss \Sigma}<\infty$ almost surely.
\item[-] Let $M_{\sss \cA}$ be such that $\Xi^{\sss A,B}_n(p_1,p_2)\supset \frac{n}{2}\cA(\nu,p_1)$ for $n\geq M_{\sss \cA}$, and note that $M_{\sss \cA}<\infty$ almost surely by Lemma \ref{le:2tlb}.
\end{itemize}

Take $k$ such that $\frac{n}{2}\cA(\nu,p_1)\supset \cD(n^{3/4})$ for $n>k$, and define $M=\max\{M_{\sss \Sigma},M_{\sss \cA},k\}$. Since $M<\infty$ almost surely, we have for large $m$ that
\begin{equation}\label{eq:CM}
\PP^{\sss A,B}(C\cap \{M\leq m\})\geq \frac{1}{2}\PP^{\sss A,B}(C)>0.
\end{equation}

Write $\Pi'(\Sigma)$ for an independent particle configuration on $\Sigma$, distributed as the initial state of a process started from $(A',B')$, with associated independent random walks and delay variables, and let $M'$ be such that, if all particles in $\Pi'(\Sigma)$ start moving at time 0, then none of them is located outside $\cD(n^{3/4})$ at time $n$ for $n>M'$. Then $M'<\infty$ by Lemma \ref{le:RW}. Fix $m$ large to ensure both \eqref{eq:CM} and $\PP(M'\leq m)\geq 1/2$.

Now pick random quantities -- initial configuration, random walks $S$ and delay variables $L$ -- such that both types activate infinitely many particles in $\Pi^{\sss A,B}$ and such that $M\leq m$ in that process. Also pick $\Pi'(\Sigma)$ with $M'\leq m$. Consider a process started from the sets $(A',B')$, with $\Sigma$ initially populated as in $\Pi'(\Sigma)$ and $\Sigma^c$ initially populated as in $\Pi^{\sss A,B}$. We claim that, by controlling the movements of a finite number of particles in the beginning of the time course, after some finite time we can obtain a configuration where (i) the status and location of all particles originating from $\Sigma^c$ is the same as in $\Pi^{\sss A,B}_m$, and where (ii) all particles originating from $\Sigma$ are activated and located at their initial position. This is achieved e.g.\ by the following scenario:

\begin{itemize}
\item [1.] First one of the initially active type 1 particles from $A'$ activates all sleeping particles in $\Sigma$. The activated particles stay at their initial sites, and so do the type 2 particle(s) in $B'$ and any additional type 1 particles in $A'$.
\item[2.] A type 1 particle from $A'$ and a type 2 particle from $B'$ then move together and activate all particles originating from sites in $\xi^{\sss A,B}_m\cap \Sigma^c$ with the same type as in $\Pi^{\sss A,B}_m$. The two particles then return to their initial positions. All other particles in $\Sigma$, as well as the activated particles in $\Sigma^c$, stay at their initial sites.
\item[3.] The activated particles in $\Sigma^c$ finally move to their positions in $\Pi^{\sss A,B}_m$, while the particles in $\Sigma$ stay at their initial locations.
\end{itemize}

Let $\tau$ denote the minimal time required to achieve this. We call this the coupling time. At this point, we couple the process to the randomness in $\Pi^{\sss A,B}$ and $\Pi'(\Sigma)$ by letting all particles in $\Sigma^c$ move according to the same random objects that control their behavior after time $m$ in the process $\Pi^{\sss A,B}$, while all particles in $\Sigma$ move according to the same random objects that control their behavior from time 0 in $\Pi'(\Sigma)$. We claim that, with this construction, all particles in $\Sigma^c$ are activated by the same type as in $\Pi^{\sss A,B}$ so that, in particular, both types activate infinitely many particles if they do so in $\Pi^{\sss A,B}$. To see this, we need to see that the fact that the configuration in $\Sigma$ is different compared to $\Pi_m^{\sss A,B}$ does not affect the activation of sites in $\Sigma^c$. First note that, since $n>M_{\sss \Sigma}$, the fact that there may be fewer particles in $\Sigma$ in the coupled process compared to $\Pi_m^{\sss A,B}$ will not cause fewer sites to be activated. Specifically, for any $t>0$, the set of discovered sites at time $\tau+t$ in the coupled process contains $\Pi^{\sss A,B}_{m+t}$. That any additional particles in $\Sigma$ in the coupled process will not discover any new sites then follows from that $m>M'$, $m>M_{\sss \cA}$ and $m>k$. Indeed, the set of discovered sites in the coupled process at time $\tau+t$ will exceed $\frac{m+t}{2}\cA(\nu,p_1)$, while the particles with initial location in $\Sigma$ will never reach outside $\cD((m+t)^{3/4})$.

Write $C^{\sss A,B}$ for the event that both types activate infinitely many sites in the original process $\Pi^{\sss A,B}$ and $\hat{C}^{\sss A',B'}$ for the same event in the coupled process started from $(A',B')$, that is, in a process generated by independent randomness up to time $\tau$ and then coupled to $\Pi^{A,B}$ and $\Pi'(\Sigma)$ as described above. Also let $\hat{F}$ denote the event that the scenario described in 1-3 occurs in the time interval $[0,\tau]$ in the coupled process.  We then have that
\begin{eqnarray*}
\PP(\hat{C}^{\sss A',B'}) & \geq & \PP(\hat{C}^{\sss A',B'}|C^{\sss A,B}\cap \{M\leq m\}\cap \{M'\leq m\})\cdot \PP(C^{\sss A,B}\cap \{M\leq m\}\cap \{M'\leq m\})\\
& \geq & \PP(\hat{F})\cdot \PP(C^{\sss A,B}\cap \{M\leq m\})\cdot\PP(M'\leq m),
\end{eqnarray*}
where we have used the fact that $M'$ is independent of the randomness in $\Pi^{\sss A,B}$. Here $\PP(\hat{F})>0$ since the scenario in 1-3 is obtained by controlling the movements of a finite number of particles during a finite time interval, and the last two factors are positive by the choice of $m$. Hence $\PP(\hat{C}^{A',B'})>0$ and, since the coupled processes has the same distribution $\PP^{A',B'}$ as an original process started from $(A',B')$, we obtain that $\PP^{\sss A',B'}(C)>0$, as desired.
\end{proof}


\begin{thebibliography}{99}

\bibitem{frogs_shape} Alves, O., Machado, F. and Popov, S. (2002): The shape theorem for the frog model, \emph{Ann. Appl. Probab.} \textbf{12}, 533-546.

\bibitem{frogs_shape_random} Alves, O., Machado, F., Popov, S. and Ravishankar, K. (2001): The shape theorem for the frog model with random initial configuration, \emph{Markov Proc. Rel. Fields} \textbf{7}, 525-539.

\bibitem{phase_transition} Alves, O., Machado, F. and Popov, S. (2002): Phase transition for the frog model, \emph{Electr. J. Probab.} \textbf{7}, 1-21.

\bibitem{irrel} Deijfen, M. and H\"aggstr\"om, O. (2006): The initial configuration is irrelevant for the possibility of mutual unbounded growth in the two-type Richardson model, {\em Comb. Probab. Computing} \textbf{15}, 345-353.

\bibitem{unbounded} The two-type Richardson model with unbounded initial configurations, {\em Ann. Appl. Probab.} \textbf{17}, 1639-1656.

\bibitem{comp_frogs} Deijfen, M.\, Hirscher, T.\ and Lopes, F.\ (2019): Competing frogs on $\Zd$, \emph{Electr. J. Probab.}, to appear.

\bibitem{drift} D\"{o}bler, C., Gantert, N.,  H\"{o}felsauer, T., Popov,S. and Weidner, F.\ (2018): Recurrence and transience of frogs with drift on $\Zd$, \emph{Electron. J. Probab.} \textbf{23}, 1-23.

\bibitem{telcs} Telcs, A. and Wormald, N. (1999): Branching and tree indexed random walks on fractals, \emph{J. Appl. Probab.} \textbf{36}, 999-1011.

\end{thebibliography}
\end{document}